\documentclass[12pt]{article}
\usepackage{tikz}
\usepackage{amsthm,amssymb}

\newtheorem{theorem}{Theorem}

\newtheorem{lemma}[theorem]{Lemma}

\def\cal{\mathcal}

\def\floor#1{\lfloor#1\rfloor}

\def\R{{\mathbb R}}

\date{\today}

\hsize=126mm
\vsize=180mm
\parindent=5mm

\title{The motif problem}

\author{
E. Rodney Canfield\\
\small Department of Computer Science\\[-0.8ex]
\small University of Georgia\\[-0.8ex]
\small\tt erc@cs.uga.edu\\
\and
Ron Fertig\\
\small Center for Communications Research\\[-0.8ex] 
\small San Diego, CA\\[-0.8ex]
\small\tt rfertig@ccrwest.org\\
\and
R. Daniel Mauldin\\
\small Department of Mathematics\\[-0.8ex] 
\small The University of Texas\\[-0.8ex]
\small\tt rdmauldin@unt.edu\\
\and David Moews\\
\small Center for Communications Research\\[-0.8ex] 
\small San Diego, CA\\[-0.8ex]
\small\tt dmoews@ccrwest.org\\}

\begin{document}

\maketitle
\begin{abstract}  Fix a choice and ordering of four pairwise non-adjacent
vertices of a parallelepiped, and call a {\em motif} a sequence of four
points in $\R^3$ that coincide with these vertices for some, possibly
degenerate, parallelepiped whose edges are parallel to the axes.
We show that a set of $r$ points can contain at most $r^2$ motifs.
Generalizing the notion of motif to a sequence of $L$ points in $\R^p$,
we show that the maximum number of motifs that can occur in a point set
of a given size is related to a linear programming problem arising from
hypergraph theory, and discuss some related questions.
\end{abstract}
\renewcommand\thefootnote\relax\footnotetext{2010 Mathematics Subject Classification: 05B30.}

\section{Introduction}\label{intro}

\begin{figure}
\begin{center}

$$
\phantom{\kern3.5in}\begin{array}{c|ccc}
&x&y&z\\
a&q&u&v\\
b&s&t&v\\
c&q&t&w\\
d&s&u&w
\end{array}
$$
\\

\vskip -1.4in
\begin{tikzpicture}[scale=1.5]
\begin{scope}
\draw (0,0) -- (1,0) -- (0.5,-1.5) -- (-0.5,-1.5) -- (0,0) [line width=0.02cm];
\draw (0,-3) -- (1,-3) -- (0.5,-4.5) -- (-0.5,-4.5) -- (0,-3) [line width=0.02cm];
\draw (0,0) -- (0,-3) [line width=0.02cm];
\draw (1,0) -- (1,-3) [line width=0.02cm];
\draw (-0.5,-1.5) -- (-0.5,-4.5) [line width=0.02cm];
\draw (0.5,-1.5) -- (0.5,-4.5) [line width=0.02cm];
\draw (-0.5,0) node [anchor=west] {a};
\draw (0.5,-1.5) node [anchor=west] {b};
\draw (1.0,-3) node [anchor=west] {c};
\draw (-1.0,-4.5) node [anchor=west] {d};

\draw[->] (2,-1.5) -- (2.3,-1.5) [line width=0.02cm];
\draw (2.3,-1.5) node [anchor=west] {y};
\draw[->] (2,-1.5) -- (2,-1.2) [line width=0.02cm];
\draw (2,-1.2) node [anchor=south] {z};
\draw[->] (2,-1.5) -- (1.9,-1.8) [line width=0.02cm];
\draw (1.9,-1.8) node [anchor=east] {x};

\end{scope}
\end{tikzpicture}\\
Figure 1. A  motif.
\end{center} 
\end{figure}

Let a {\em motif} be a quadruple of four points $(a,b,c,d)$ in $\R^3$
that lie on pairwise non-adjacent 
vertices of a parallelepiped with edges parallel to the
$x$-, $y$-, and $z$-axes, as shown in Figure 1.  
We allow the
parallelepiped to be a reflection of the one shown here, or to be degenerate,
so that the only constraint on a motif is that there are certain equality
constraints between the coordinates of $a$, $b$, $c$, and $d$; 
we require the $x$ coordinates of $a$ and $c$ to be the same,
the $y$ coordinates of $b$ and $c$ to be the same, and so on,
as shown in the table in Figure 1.  (The term is not to be confused
with the notion of motif as used in algebraic geometry.)

Given a set $\cal S$ of $r$ distinct points in $\R^3$, how many motifs can be 
made out of the points in $\cal S$?  We will show that there can be at most
$r^2$.  Foliate $\R^3$ by planes parallel to the $xy$-plane, and suppose
that there are $n$ such planes, $P_1$, \dots, $P_n$, say, that  contain points 
of $\cal S$, containing $r_1$, \dots, $r_n$ points of 
$\cal S$, respectively.  We may order the planes so that 
$r_1\ge \cdots\ge r_n>0$.  Now, in any motif, $a$ and $b$ must be contained
in the same plane, $P_i$ say, and $c$ and $d$ must be in the same plane, $P_j$,
say.  If $i\le j$, then fix $j$, and observe that $a$ and $b$ are determined
by $c$, $d$, and $i$; there are then at most $j$ ways of choosing $i$,
and $r_j^2$ ways of choosing $c$ and $d$, so our choice of $j$ contributes
at most $j r_j^2$ motifs.  On the other hand, if $i>j$, fix $i$, and observe
that $c$ and $d$ are determined by $a$, $b$, and $j$, and that there
are at most $i-1$ ways of choosing $j$, so our choice of $i$ contributes
at most $(i-1) r_i^2$ motifs.  Therefore, the total 
number of motifs possible is no more than
\begin{eqnarray*}
\sum_{1\le j\le n} j r_j^2 + \sum_{2\le i\le n} (i-1) r_i^2
&=& \sum_{1\le i\le n} (2i-1) r_i^2 \\
&\le& \sum_{1\le i\le n} r_i(2r_1+\cdots+2r_{i-1}+r_i)\\
&=& (r_1+\cdots+r_n)^2=r^2, \qquad \hbox{as desired.}
\end{eqnarray*}

On the other hand, if we choose $\cal S$ to be all points in a 
three-dimensional grid of $A$ points by $B$ points by $C$ points, 
$\cal S$ will have size $r=ABC$ and will admit $A^2B^2C^2=r^2$ motifs.  This 
shows that the upper bound $r^2$ is optimal.

\section{A generalization}

We generalize the notion of motif to $L$-tuples of points in $\R^p$
as follows: given positive integers
$L$ and $p$, call a {\em motif specification on $p,L$} a sequence $\pi_1$,
\dots, $\pi_p$ of partitions of subsets of $\{1,\ldots,L\}$, together
with constants $D_{im}\in \R$ for each pair $(i,m)$ such that
$i\in\{1,\ldots,L\}$, $m\in\{1,\ldots,p\}$, and $i$ is in no block of $\pi_m$.
A tuple $(f_1,\ldots,f_L)\in (\R^p)^L$ will then be a {\em motif} (for
that specification) if:
\begin{enumerate}
\item Whenever $i$, $j\in\{1,\ldots,L\}$ are in the same block of
$\pi_m$, for some $m\in\{1,\ldots,p\}$, then $(f_i)_m=(f_j)_m$.
\item Whenever $i\in\{1,\ldots,L\}$ is in no block of $\pi_m$,
for some $m\in\{1,\ldots,p\}$, then $(f_i)_m=D_{im}$.
\end{enumerate}

Given a subset ${\cal S}\subseteq \R^p$ and a motif specification, we 
let ${\cal M}({\cal S})$ be the number of motifs in $\cal S$,
i.e., the number of $L$-tuples of elements of $\cal S$ that  are motifs.
We want to know how large ${\cal M}({\cal S})$ can be 
for a set $\cal S$ of given size.  To state and prove an estimate
for this, we make the following definitions:
\begin{itemize}
\item Let the {\em hypergraph of a motif specification} be
the hypergraph with vertex set $\{1,\ldots,L\}$ and an edge
for each block of each partition $\pi_1$, \dots, $\pi_p$, 
such that each edge is incident on the vertices that  are elements
of the corresponding block.
\item Given a hypergraph $H$, let $V(H)$ be the set of its vertices
and $E(H)$ be the set of its edges.  We write $x\in e$ if the edge $e$
is incident on the vertex $x$.
\item \cite{furedi} A hypergraph is {\em uniform} if all its edges have the same size.
\item \cite{lovasz}\cite{furedi} A {\em fractional transversal} $g$ of a hypergraph $H$ is
a function $g: V(H)\rightarrow{\mathbb R}_{\ge 0}$ such that
$\sum_{x \in e} g(x) \ge  1$ for all $e \in E(H)$.
We call $\sum_x g(x)$ the {\em total weight} of $g$.
\item \cite{lovasz}\cite{furedi} The {\em fractional transversal number}, $\tau^*(H)$, of a
hypergraph $H$ is defined to be
$$
\tau^*(H) := \min \{ \sum_x g(x)\  \mid \hbox{$g$ a fractional transversal of $H$.}\}
$$
\item \cite{lovasz}\cite{furedi} A {\em fractional matching} $f$ of a hypergraph $H$ is
a function $f: E(H)\rightarrow{\mathbb R}_{\ge 0}$ such that
$\sum_{x \in e} f(e) \le  1$ for all $x \in V(H)$.
We call $\sum_e f(e)$ the {\em total weight} of $f$.
\item \cite{lovasz}\cite{furedi} The {\em fractional matching number}, $\nu^*(H)$, of a
hypergraph $H$ is defined to be
$$
\nu^*(H) := \max \{ \sum_e f(e)\  \mid \hbox{$f$ a fractional matching of $H$.}\}
$$
\item Given any $m>0$ and any function $F: \R^m\rightarrow \R$, write *
for an argument of $F$ in order to denote summing over that argument:
\begin{eqnarray*}
G(*)&=&\sum_{a\in\R} G(a),\\
F(*,*,c)&=&\sum_{a,b\in\R} F(a,b,c),\\
\hbox{etc.}&&
\end{eqnarray*}
\end{itemize}

The hypergraph of a motif specification may contain multiple edges which
are incident on the same set of vertices.  However, this need not be 
considered when computing $\tau^*$ and $\nu^*$, since if the hypergraph $H'$ is 
obtained from $H$ by omitting duplicate edges, then $\tau^*(H')=\tau^*(H)$ and 
$\nu^*(H')=\nu^*(H)$.

\begin{theorem}  \cite{lovasz}\cite{furedi} For all hypergraphs $H$, $\tau^*(H)=\nu^*(H)$.  Also, if a fractional transversal and fractional matching of $H$
have the same total weight $w$, then $w=\tau^*(H)=\nu^*(H)$.
\end{theorem}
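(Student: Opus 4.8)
The plan is to identify $\tau^*(H)$ and $\nu^*(H)$ as the optimal values of a dual pair of linear programs, so that the equality $\tau^*(H)=\nu^*(H)$ becomes an instance of the strong duality theorem of linear programming (which is, in effect, the content of the cited references \cite{lovasz}, \cite{furedi}); the second assertion will then drop out of the much easier \emph{weak} duality inequality, so it is convenient to prove weak duality first.

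First I would establish weak duality by a double-counting argument. Let $g$ be any fractional transversal and $f$ any fractional matching of $H$. Then
\[
\sum_{e\in E(H)} f(e)\;\le\;\sum_{e\in E(H)} f(e)\sum_{x\in e} g(x)\;=\;\sum_{x\in V(H)} g(x)\sum_{e\ni x} f(e)\;\le\;\sum_{x\in V(H)} g(x),
\]
where the first step uses $f(e)\ge 0$ together with $\sum_{x\in e} g(x)\ge 1$, the middle step is an interchange of the order of summation, and the last step uses $g(x)\ge 0$ together with $\sum_{e\ni x} f(e)\le 1$. Taking the supremum over $f$ and the infimum over $g$ gives $\nu^*(H)\le\tau^*(H)$. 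This already yields the second assertion of the theorem: if $g$ and $f$ both have total weight $w$, then $\tau^*(H)\le w$ because $g$ is an admissible competitor in the defining minimum, and $w\le\nu^*(H)$ because $f$ is an admissible competitor in the defining maximum, so $w\le\nu^*(H)\le\tau^*(H)\le w$ and the three quantities coincide.

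It remains to prove the reverse inequality $\tau^*(H)\le\nu^*(H)$. Writing $A$ for the edge-vertex incidence matrix of $H$ (rows indexed by edges, columns by vertices, with $A_{e,x}=1$ precisely when $x\in e$) and $\mathbf{1}$ for an all-ones vector of the appropriate length, the fractional transversal problem is the linear program $\min\{\,\mathbf{1}^{\top}g : Ag\ge\mathbf{1},\ g\ge 0\,\}$, and the fractional matching problem is exactly its dual $\max\{\,\mathbf{1}^{\top}f : A^{\top}f\le\mathbf{1},\ f\ge 0\,\}$. Since every block of a partition is nonempty, every edge of $H$ is nonempty, so $g\equiv\mathbf{1}$ is a fractional transversal; hence the transversal program is feasible and bounded below by $0$, and by the duality theorem of linear programming it and its dual attain a common optimal value, i.e.\ $\tau^*(H)=\nu^*(H)$. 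The one point that genuinely needs attention is this feasibility-and-boundedness check that licenses the invocation of strong duality, which is why it is worth recording explicitly that the hypergraph of a motif specification has no empty edge; beyond that, the statement is a repackaging of standard linear-programming duality and I anticipate no real obstacle.
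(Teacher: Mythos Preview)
Your proof is correct and follows essentially the same route as the paper: both arguments identify the fractional transversal and fractional matching problems as a primal/dual pair of linear programs and invoke LP duality for the equality $\tau^*(H)=\nu^*(H)$, with the second claim then obtained from the definitions. Your version simply supplies more detail---an explicit weak-duality double-count and a feasibility check (nonempty edges, so $g\equiv\mathbf{1}$ is admissible)---that the paper leaves implicit.
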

\begin{proof}
If $M$ is the incidence matrix of $H$, $\tau^*(H)$ is the optimal value for
the linear programming problem of minimizing 
${\bf 1}^T {\bf x}$, where ${\bf x}\ge 0$ and $M {\bf x}\ge {\bf 1}$.
Also, $\nu^*(H)$ is the optimal value for the linear programming problem
of maximizing ${\bf 1}^T {\bf y}$, where ${\bf y}\ge 0$ and 
$M^T {\bf y}\le {\bf 1}$.  However, these problems are dual, so they have the 
same optimal value and $\tau^*(H)=\nu^*(H)$.  The second claim now follows 
from the definitions of $\tau^*$ and $\nu^*$.
\end{proof}

In the case of our introductory example, $L=4$ and $p=3$,
and the motif specification uses partitions 
$\pi_1=\{\{1,3\},\{2,4\}\}$, $\pi_2=\{\{1,4\},\{2,3\}\}$,
and $\pi_3=\{\{1,2\},\{3,4\}\}$.  The hypergraph $H$ is therefore
a complete graph on 4 vertices, and one fractional transversal of $H$
is the function $g_0$ that  takes all vertices to $1/2$.  Similarly,
one fractional matching of $H$ is the function $f_0$ that  takes all
edges to $1/3$.  Since these have the same total weight, 2, we see that
$\tau^*(H)=\nu^*(H)=2$, and that $g_0$ and $f_0$ are optimal.

We can now prove an upper bound on ${\cal M}({\cal S})$ using 
the next lemma.  Take $0^0$ to be 1.

\begin{lemma}\label{l1} 
Given some $n>0$, 
functions $F_1$, \dots, $F_n: \R\rightarrow \R_{\ge 0}$
each of which is nonzero at most at only finitely many places,
and exponents $h_1$, \dots, $h_n\ge 0$ such that
$h_1+\cdots+h_n\ge 1$, then
$$
\sum_{a\in\R} F_1(a)^{h_1}\cdots F_n(a)^{h_n}
\le F_1(*)^{h_1}\cdots F_n(*)^{h_n}.
$$
\end{lemma}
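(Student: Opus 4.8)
The plan is to prove this by induction on $n$, where the base case $n=1$ reduces to the elementary inequality $\sum_a F_1(a)^{h_1} \le (\sum_a F_1(a))^{h_1}$ for $h_1 \ge 1$ (using that $F_1$ is nonnegative and finitely supported, and that $t \mapsto t^{h_1}$ is superadditive on $\R_{\ge 0}$ when $h_1 \ge 1$; the convention $0^0 = 1$ handles the case where some exponent vanishes). For the inductive step, I would like to peel off one factor, say $F_n$, and apply Hölder's inequality in the sum over $a$: write the summand as a product and choose Hölder exponents so that the pieces recombine into terms the inductive hypothesis can bound. The natural split is into the part with exponents $h_1,\dots,h_{n-1}$ (whose sum may be less than $1$) and the part $F_n(a)^{h_n}$.

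Concretely, the key step is the following case analysis. If $h_1 + \cdots + h_{n-1} \ge 1$ already, then I can bound $F_n(a)^{h_n} \le F_n(*)^{h_n}$ crudely — since every term of the finite sum $F_n(*) = \sum_b F_n(b)$ is nonnegative, $F_n(a) \le F_n(*)$, and raising to the power $h_n \ge 0$ preserves the inequality — pull that constant out of the sum over $a$, and apply the inductive hypothesis to $\sum_a F_1(a)^{h_1}\cdots F_{n-1}(a)^{h_{n-1}}$. If instead $h_1 + \cdots + h_{n-1} < 1$, set $s = h_1 + \cdots + h_{n-1}$, so $h_n \ge 1 - s > 0$, and apply Hölder with exponents $1/s$ and $1/(1-s)$ to the sum $\sum_a \bigl(F_1(a)^{h_1}\cdots F_{n-1}(a)^{h_{n-1}}\bigr)\cdot F_n(a)^{h_n}$: this gives
$$
\sum_a F_1(a)^{h_1}\cdots F_n(a)^{h_n}
\le \Bigl(\sum_a F_1(a)^{h_1/s}\cdots F_{n-1}(a)^{h_{n-1}/s}\Bigr)^{s}\Bigl(\sum_a F_n(a)^{h_n/(1-s)}\Bigr)^{1-s}.
$$
The exponents $h_i/s$ now sum to $1$, so the inductive hypothesis (in the form with exponent sum exactly $1$) bounds the first parenthesized sum by $F_1(*)^{h_1/s}\cdots F_{n-1}(*)^{h_{n-1}/s}$; and $h_n/(1-s) \ge 1$, so the base case bounds the second by $F_n(*)^{h_n/(1-s)}$. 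Raising to the powers $s$ and $1-s$ respectively and multiplying yields exactly $F_1(*)^{h_1}\cdots F_n(*)^{h_n}$.

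The main obstacle I anticipate is bookkeeping around degenerate cases rather than any deep difficulty: one must be careful when some $h_i = 0$ (then $F_i(a)^{h_i} = 1$ by the $0^0 = 1$ convention, so that factor should simply be dropped before applying Hölder, since $h_i/s$ would be fine but $h_i/(1-s)$-type manipulations on a vanishing exponent are vacuous), when $s = 0$ (all of $h_1,\dots,h_{n-1}$ vanish, so the statement collapses to the base case in $F_n$ alone), and when $F_i$ is identically zero (both sides are $0$, or involve $0^0=1$ consistently). It is cleanest to first reduce to the case where all $h_i > 0$ by discarding zero exponents, and to treat "$h_1 + \cdots + h_n = 1$ exactly" as the core statement, deriving the general "$\ge 1$" version by noting that decreasing no individual variable but the claim only gets easier — more precisely, if $h_1 + \cdots + h_n > 1$ one can shrink one exponent to restore equality, since $F_1(*)^{h_1}\cdots F_n(*)^{h_n}$ as a function of the exponents is handled by the same crude $F(a) \le F(*)$ bound on the discrepancy. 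With that normalization in place, the Hölder step above is routine.
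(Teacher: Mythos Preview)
Your proof is correct and rests on the same two ingredients as the paper's: H\"older's inequality for the case $\sum_i h_i = 1$, and the power-sum inequality $\bigl(\sum_\alpha x_\alpha^r\bigr)^{1/r} \le \sum_\alpha x_\alpha$ for $r\ge 1$ (your base case) to absorb any surplus in the exponent sum. The paper simply discards zero exponents and then cites these two facts as Theorems~11 and~22 of Hardy--Littlewood--P\'olya, whereas you package the same content as an induction on $n$ with a two-function H\"older step; the arguments are essentially equivalent.
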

\begin{proof}
Zero exponents do not contribute to the product on either side, so discard
them, and assume that $h_i>0$ for all $i$.  Then, in the case where $\sum_i h_i=1$, the inequality is Theorem 11 of \cite{hwp}, which is a form of H\"older's 
inequality.  If $\sum_i h_i>1$, the inequality is Theorem 22 of \cite{hwp}, and follows from the $\sum_i h_i=1$ case together with the inequality
$$
\left(\sum_{\alpha} x_{\alpha}^r\right)^{1/r}
\le
\sum_{\alpha} x_{\alpha},
\qquad r>1, x_{\alpha}\ge 0.
$$
\end{proof}

\begin{theorem}  
\label{thm1}
Given any motif specification on $p,L$, let $H$ be its hypergraph,
and let $\cal S$ be a subset of $\R^p$ with size $r$.  Then 
${\cal M}({\cal S})\le r^{\tau^*(H)}$.
\end{theorem}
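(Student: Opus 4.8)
The plan is to turn the count of motifs into a ``box\-counting'' statement about the hypergraph $H$ and then to prove that statement by an induction whose only analytic ingredient is Lemma~\ref{l1}.

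First I would record the reformulation. For a vertex $i$ of $H$ write $E_i\subseteq E(H)$ for the set of edges incident on $i$. Because the blocks of a single partition $\pi_m$ are pairwise disjoint, the members of $E_i$ come from pairwise distinct partitions, and for a motif $(f_1,\dots,f_L)$ the point $f_i$ is recovered coordinatewise by $(f_i)_m=\psi(e)$, where $e$ is the unique member of $E_i$ that is a block of $\pi_m$ (when one exists) and $\psi:E(H)\to\R$ assigns to each block the common value of the corresponding coordinate of its vertices, and $(f_i)_m=D_{im}$ for all remaining $m$. Thus the motif is determined by $\psi$, and $f_i$ and the restriction $\psi|_{E_i}$ determine each other. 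Consequently ${\cal M}({\cal S})$ is at most the number of functions $\psi:E(H)\to\R$ such that $\psi|_{E_i}\in A_i$ for every $i$, where $A_i\subseteq\R^{E_i}$ is the set of those restrictions whose induced point $f_i$ lies in $\cal S$; since $f_i$ determines $\psi|_{E_i}$, we have $|A_i|\le|{\cal S}|=r$.

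Next I would prove, by induction on $|E(H)|$, that for any hypergraph $H$, any finite sets $A_i\subseteq\R^{E_i}$ $(i\in V(H))$, and any fractional transversal $g$ of $H$, the number of $\psi:E(H)\to\R$ with $\psi|_{E_i}\in A_i$ for all $i$ is at most $\prod_i|A_i|^{g(i)}$. For the inductive step I would fix an edge $e_0$; writing $N_c$ for the number of admissible $\psi$ with $\psi(e_0)=c$, and, for each $i$ with $e_0\in E_i$, letting $A_i^c\subseteq\R^{E_i\setminus\{e_0\}}$ be the set of restrictions of the members of $A_i$ taking the value $c$ on $e_0$ (so that $\sum_c|A_i^c|=|A_i|$), the induction hypothesis applied to $H-e_0$ — for which $g$ remains a fractional transversal of the same total weight — bounds $N_c$ by $\prod_{i:\,e_0\in E_i}|A_i^c|^{g(i)}\cdot\prod_{i:\,e_0\notin E_i}|A_i|^{g(i)}$. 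Summing over $c$ and applying Lemma~\ref{l1} with the exponents $g(i)$ for $i\in e_0$, whose sum is at least $1$ since $g$ is a fractional transversal, gives the desired bound on $\sum_c N_c$; the base case $E(H)=\emptyset$ is immediate. Finally, choosing $g$ with $\sum_i g(i)=\tau^*(H)$ and using $|A_i|\le r$, the reformulation gives ${\cal M}({\cal S})\le\prod_i r^{g(i)}=r^{\tau^*(H)}$.

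I expect the work to lie not in any single hard step but in arranging the induction cleanly: one must check that deleting an edge leaves $g$ a fractional transversal, that the exponents fed to Lemma~\ref{l1} sum to at least $1$ (this is exactly the transversal inequality at the deleted edge), and that everything stays finite — each edge lies in some $E_i$ because $g$ is a transversal, so each admissible $\psi$ ranges over a finite set. Duplicate edges in $H$ are harmless, affecting neither $\tau^*$ nor the argument.
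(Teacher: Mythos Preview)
Your proof is correct and takes essentially the same approach as the paper's: both express the motif count as a sum over assignments of real values to the edges of $H$, attach the weight $g(i)$ of an optimal fractional transversal to the $i$th vertex/row, and then eliminate the edge-variables one at a time via Lemma~\ref{l1}, using the transversal inequality $\sum_{i\in e}g(i)\ge 1$ at each step. Your version packages the iterative elimination as an induction on $|E(H)|$ and abstracts the row-indicators $I(R_i)$ to arbitrary finite sets $A_i$, which is a clean reformulation, but the underlying argument is identical.
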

\begin{proof} 
Let $I:\R^p\rightarrow\{0,1\}$ be the indicator function of $\cal S$.
Fix a motif specification.  Any motif can then be written as an $L$ by $p$
matrix of coordinates 
whose entries are variables and constants, where there is one variable for 
each block of each $\pi_i$, and the constants are the $D_{im}$'s, placed
in the appropriate spots of the matrix.  For instance, 
for the motif specification
used in the introduction, the matrix will be a 4 by 3 matrix 
containing six variables, as shown on the right-hand side of Figure 1.
In general, we may suppose that we use variables $v_1$, \dots, $v_n$,
and we can then write the number of motifs in $\cal S$ as
\begin{equation}
\label{eone}
{\cal M}({\cal S})=\sum_{v_1,\ldots,v_n\in \R} I(R_1) \cdots I(R_L),
\end{equation}
where $R_1$, \dots, $R_L$ are $p$-tuples of variables and constants obtained
from reading off the rows of the matrix.
Now, taking the number 0 or 1 and raising it to an arbitrary nonnegative
power will not decrease it.  So, from
(\ref{eone}), we find that the number of motifs in $\cal S$ is no more than
\begin{equation}
\label{etwo}
Y=\sum_{v_1,\ldots,v_n\in \R} I(R_1)^{g(1)} \cdots I(R_L)^{g(L)},
\end{equation}
where $g$ is an optimal fractional transversal of $H$.
For example, using the matrix in Figure 1 and the
optimal fractional transversal $g_0$ mentioned above, (\ref{etwo})
will take the form
$$
Y=\sum_{q,s,t,u,v,w\in \R} I(q,u,v)^{1/2} I(s,t,v)^{1/2} I(q,t,w)^{1/2} I(s,u,w)^{1/2}.
$$

Now, observe that any given variable in the sum (\ref{etwo}) 
corresponds to an edge, $e$, say, 
of $H$, and that the sum of the exponents of the $I(R_i)$'s in which this 
variable appears equals $\sum_{x\in e} g(x)$, which, by the definition of
a fractional transversal, is at least 1.  We can therefore apply Lemma 
\ref{l1} to eliminate the sum over this variable, replacing it by * wherever 
it appears.  This will not decrease the value of (\ref{etwo}).  In the case 
given in Figure 1, for example, we observe that
$$
\sum_q I(q,u,v)^{1/2} I(q,t,w)^{1/2}\le I(*,u,v)^{1/2} I(*,t,w)^{1/2}
$$
and so 
$$
Y\le\sum_{s,t,u,v,w\in \R} I(*,u,v)^{1/2} I(s,t,v)^{1/2} I(*,t,w)^{1/2} I(s,u,w)^{1/2}.
$$
If we repeat this operation until all variables are removed, we will find that 
$$Y\le I(R'_1)^{g(1)} \cdots I(R'_L)^{g(L)},$$
where each $R'_i$ is a $p$-tuple each of whose elements is either a constant
or *.  Since each $I(R'_i)$ can be no bigger than $r$, this shows that
$\cal S$ has no more than $r^{\tau^*(H)}$ motifs, as desired.
\end{proof}

We can also prove a lower bound which shows that in infinitely many
cases, this upper bound is best possible, up to a constant factor.

\begin{theorem} \label{thm2}
Given any motif specification on $p,L$, let $H$ be its hypergraph.
Then, for all $N>0$, there exists a subset $\cal S$ of $\R^p$ such that
the size, $r$, of $\cal S$ is at least $N$, and 
${\cal M}({\cal S})\ge (r/L)^{\nu^*(H)}$.
\end{theorem}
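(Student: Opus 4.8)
The upper bound proof used a fractional transversal $g$ of $H$ to "fold up" sums via H\"older's inequality; for the lower bound we should run this in reverse, using an optimal fractional matching $f$ of $H$. By Theorem 1, $f$ has total weight $\nu^*(H)=\tau^*(H)$. The natural construction is a product set: for each edge $e\in E(H)$, introduce a coordinate ranging over a set of size $\approx M^{f(e)}$ for a large parameter $M$, and take $\cal S$ to consist of all points of $\R^p$ obtained by filling the $L\times p$ matrix of variables-and-constants in all possible ways. Every such filling yields, for each of the $L$ rows, a point of $\R^p$, and the set $\cal S$ is the union over all $L$ rows and all fillings of the point in that row.

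First I would set up the construction carefully. Fix $M$ large, and for each edge $e$ let $S_e\subseteq\R$ be a set of $\ceil{M^{f(e)}}$ distinct reals (choosing the reals generically so that no accidental coincidences among coordinates occur across different rows — or, more safely, so that the constants $D_{im}$ and the various $S_e$'s all live in disjoint intervals). A \emph{filling} is a choice of one element of $S_e$ for each edge $e$; each filling determines $L$ points $f_1,\dots,f_L\in\R^p$ by reading off the rows of the matrix (variable positions get the chosen value, constant positions get $D_{im}$). Let $\cal S$ be the set of all points arising as some $f_i$ from some filling. Then $|{\cal S}|=r\le L\prod_{e}\ceil{M^{f(e)}}$, so $r/L\le\prod_e\ceil{M^{f(e)}}$, which for the count of motifs we will want to compare against $M^{\sum_e f(e)}=M^{\nu^*(H)}$.

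Next, count motifs. Every filling produces, by construction, a genuine motif $(f_1,\dots,f_L)$ for the specification: the equality constraints hold because coordinates in the same block of $\pi_m$ share the same edge hence the same chosen value, and the constant constraints hold by construction. Distinct fillings give distinct motifs, since a filling can be recovered from the tuple it produces (each edge's value appears as some coordinate of some $f_i$). Hence ${\cal M}({\cal S})\ge\prod_e|S_e|=\prod_e\ceil{M^{f(e)}}\ge M^{\sum_e f(e)}=M^{\nu^*(H)}$. Combined with $r/L\le\prod_e\ceil{M^{f(e)}}$, we need $\prod_e\ceil{M^{f(e)}}\le {\cal M}({\cal S})$ raised to an appropriate power; since $\ceil{M^{f(e)}}\le M^{f(e)}+1$ and these differ from $M^{f(e)}$ only by lower-order terms, for $M$ large we get $(r/L)^{\nu^*(H)}\le\big(\prod_e\ceil{M^{f(e)}}\big)^{\nu^*(H)}$ and separately $\prod_e\ceil{M^{f(e)}}\le \prod_e M^{f(e)}\cdot(1+o(1))=M^{\nu^*(H)}(1+o(1))\le{\cal M}({\cal S})(1+o(1))$; cleaning this up gives ${\cal M}({\cal S})\ge(r/L)^{\nu^*(H)}$ once $M$ (hence $r$) is large enough, and $r\ge N$ is arranged by taking $M$ large.

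**The main obstacle.** The genuinely delicate point is the relation between $r=|{\cal S}|$ and $\prod_e|S_e|$: we want $r$ to be not much \emph{larger} than $\prod_e|S_e|$ (so that $r/L$ is controlled), and this is where the factor $L$ and the choice $0^0=1$ conventions and the genericity of the $S_e$ matter — overlaps between rows could only shrink $r$, which is harmless, but we must make sure $\prod_e|S_e|$ really is an upper bound for $r/L$ up to the stated constant. A subtlety is a \emph{constant} row (one all of whose $p$ entries are constants $D_{im}$, i.e.\ vertex $i$ lies in no block of any $\pi_m$): such a row contributes a single fixed point to $\cal S$ regardless of the filling. One must check this causes no problem — it contributes $1$ to the bound $r\le L\prod_e|S_e|$, which is absorbed by the factor $L$, and it contributes a fixed coordinate pattern to every motif, which is fine. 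I expect that, modulo choosing the reals in $S_e$ generically and being slightly careful with the $\ceil{\cdot}$ rounding and the large-$M$ asymptotics, the argument goes through cleanly; the reversal of the H\"older step is conceptually the heart, and it is essentially forced once one believes LP duality (Theorem 1) is the right framework.
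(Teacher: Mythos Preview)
Your construction is the right one, and your lower bound ${\cal M}({\cal S})\ge\prod_e|S_e|$ on the number of motifs is correct.  The gap is in your upper bound on $r$: you write $r\le L\prod_e|S_e|$, which is true but far too weak, and as a result your final chain of inequalities does not close.  Concretely, $\prod_e|S_e|\approx M^{\sum_e f(e)}=M^{\nu^*(H)}$, so your bound gives $r/L\lesssim M^{\nu^*(H)}$ and hence $(r/L)^{\nu^*(H)}\lesssim M^{\nu^*(H)^2}$; but you only have ${\cal M}({\cal S})\gtrsim M^{\nu^*(H)}$, so whenever $\nu^*(H)>1$ (e.g.\ the introductory example, where $\nu^*(H)=2$) the desired inequality ${\cal M}({\cal S})\ge(r/L)^{\nu^*(H)}$ simply does not follow.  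The sentence ``cleaning this up gives\dots'' is where the argument breaks; there is no cleaning up that works.

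What is missing is precisely the defining inequality of a fractional matching, which you never invoke.  The point of $f$ is that $\sum_{e\ni x} f(e)\le 1$ for every vertex $x$, and this bounds the number of distinct points arising from row $x$ of the tableau: that row depends only on the variables corresponding to edges $e$ with $x\in e$, so it takes at most
\[
\prod_{e\ni x}|S_e|\;\approx\;M^{\sum_{e\ni x} f(e)}\;\le\;M
\]
values.  Summing over the $L$ rows gives $r\le LM$ (up to rounding), hence $(r/L)^{\nu^*(H)}\le M^{\nu^*(H)}\le{\cal M}({\cal S})$, which is what you want.  The paper removes the rounding nuisance entirely by first observing that an optimal fractional matching may be taken with \emph{rational} values (it is the optimum of a rational linear program), then choosing a common denominator $d$ and letting $|S_e|=M^{d f(e)}$ exactly; each row then contributes at most $M^d$ points, $r\le LM^d$, and the inequality is clean for every integer $M$.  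Your worries about generic placement of the $S_e$ and constant rows are not where the difficulty lies.
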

\begin{proof}
Let $f$ be an optimal fractional matching of $H$ all of whose values
are rational.  If $f(e)=0$ for
all edges $e$ of $H$, then $\nu^*(H)=0$ and the theorem is trivial,
since it reduces to saying that there exist arbitrarily large sets
with at least one motif.  Otherwise, $f$ has at least one positive
value.  Let $d>0$ be a common multiple of the denominators of all positive
values of $f$, and write down the $L$ by $p$ matrix of coordinates,
as we did in Theorem \ref{thm1}.  Now, fix a positive integer $M$,
and let each variable $v$ in the matrix of coordinates run over
$M^{d f(e)}$ values, where $e$ is the edge of $H$ corresponding to $v$.
The number of values taken on by the row of the matrix corresponding to the 
vertex $x$ will then be $M^{d \sum_{x\in e} f(e)}$, which, by the definition
of a fractional matching, is at most $M^d$.  Therefore, taking the set
of all possible rows of the matrix will construct a set $\cal S$
with size at most $L M^d$.  Also, we must have $\sum_{x \in e} f(e)=1$
for at least one vertex $x$, since if not, we could increase any
$f(e)$ slightly and still have a fractional matching, contradicting
optimality of $f$.  It follows that the size of $\cal S$ must be at least
$M^d$.  Finally, the number of motifs in $\cal S$ is at least
$M^{d \sum_e f(e)}=M^{d \nu^*(H)}$.  Letting $M$ become large completes
the proof.
\end{proof}

{\bf Remark.} Start with the matrix of coordinates used in Theorems \ref{thm1} and \ref{thm2},
and make a tableau by leaving the constants in the matrix unchanged, 
but replacing 
each variable $v$ with the indeterminate $X_{f(e)}$, where $e$ is the edge of 
$H$ corresponding to $v$, and $f$ is some optimal fractional matching of $H$.  
Then it follows from the proof of Theorem \ref{thm2} that Theorem \ref{thm2} 
may be improved by replacing $L$ with $L'$,
where $L'$ is the number of distinct rows in the tableau.

\section{Uniform motifs and grids}

Suppose that the motif specification contains no constants and that its
hypergraph is uniform, i.e., all blocks in all partitions have the same size,
$n$, say.  We may then reorder the coordinates of $\R^p$ so as to place
each set of identical partitions in the specification into a contiguous range; 
let there be $q$ such ranges,
with lengths, in order, $p_1$, \dots, $p_q$ ($p_1+\cdots+p_q=p$.)

\begin{theorem} 
Let a motif specification on $p,L$ have no constants and a uniform hypergraph,
with edge size $n$, and let its coordinates be ordered as above.  
Then, for all $r\ge 0$, the maximum number of motifs in a subset $\cal S$ of 
$\R^p$ of size $r$
is always $r^{L/n}$, and this maximum is attained iff $\cal S$ is
a grid, i.e., $\cal S$ is of the form 
\begin{equation}
\label{gridform}
{\cal S}_1\times\cdots\times {\cal S}_q, \hbox{where ${\cal S}_i\subseteq
\R^{p_i}$ for each $i=1$, \dots, $q$.}
\end{equation}
\end{theorem}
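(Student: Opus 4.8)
The plan is to combine the general upper bound of Theorem \ref{thm1} with a sharp analysis of the equality case. First I would compute $\tau^*(H)$ for a uniform hypergraph: since every edge has size exactly $n$, the constant function $g\equiv 1/n$ is a fractional transversal of total weight $L/n$, and (because every vertex lies in at least one edge — each $i\in\{1,\ldots,L\}$ occurs in some block of $\pi_m$, as there are no constants and $\pi_m$ partitions a \emph{subset}, but actually one must check each vertex is covered; if some vertex lay in no block of any $\pi_m$ the specification would force a constant there, contradiction) the constant function $f\equiv 1/n$ on edges is a fractional matching of the same total weight $L/n$ after deleting duplicate edges. By Theorem 1, $\tau^*(H)=\nu^*(H)=L/n$. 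Hence Theorem \ref{thm1} gives $\mathcal M(\mathcal S)\le r^{L/n}$ for every $\mathcal S$ of size $r$, and the grid construction in the introduction (suitably generalized) shows this is attained, so the max is exactly $r^{L/n}$.

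\textbf{The equality analysis.} The substance is the "iff": a set of size $r$ attains $r^{L/n}$ exactly when it is a grid of the form (\ref{gridform}). One direction — that a grid attains the bound — I would verify by direct computation: if $\mathcal S=\mathcal S_1\times\cdots\times\mathcal S_q$ with $|\mathcal S_i|=r_i$, then within each contiguous block of $p_i$ coordinates the partition is fixed (all $\pi_m$ equal), so a motif restricted to those coordinates is an $L$-tuple of points of $\mathcal S_i$ realizing that fixed pattern; the count factors as a product over $i$ of (number of patterns in $\mathcal S_i$), and a short combinatorial argument — each of the $n$-element blocks can be filled freely and independently — gives $r_i^{L/n}$ per factor, hence $\prod_i r_i^{L/n}=r^{L/n}$. (I should double-check the exponent bookkeeping: the number of blocks across the partitions in one range, times their common size $n$, should account for all $L$ vertices with the right multiplicity.)

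\textbf{The hard direction.} The real obstacle is showing equality \emph{forces} product structure. Here I would trace through the proof of Theorem \ref{thm1} and pin down when each inequality used is tight. Two inequalities appear: (a) replacing $I(R_i)$ by $I(R_i)^{g(i)}$, which is tight only where $I\in\{0,1\}$ already makes $I=I^{g(i)}$ — always true since $I$ is an indicator and $0^0=1$, so this step is automatically an equality — wait, more carefully, it is $I(R_1)\cdots I(R_L)\le \prod I(R_i)^{g(i)}$ termwise, and since each factor is $0$ or $1$ this is an equality termwise, so no loss here; and (b) the repeated application of Lemma \ref{l1}, i.e. Hölder's inequality. So the entire slack is in the Hölder steps. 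The plan is to invoke the equality condition for Hölder: $\sum_a \prod_j F_j(a)^{h_j}=\prod_j F_j(*)^{h_j}$ (with $\sum h_j\ge 1$) holds iff either the left side is $0$, or $\sum_j h_j=1$ and the nonzero $F_j$ are all proportional as functions of $a$ — and in the $\sum h_j>1$ case one also needs the auxiliary power-mean inequality to be tight, which forces the relevant sum to have a single nonzero term unless... here I would need to be careful, but in the uniform no-constant case $\sum_{x\in e}g(x)=n\cdot(1/n)=1$ for every edge, so only the clean Hölder equality case ($\sum h_j=1$, functions proportional) arises. Unwinding "proportional" at each elimination step, for a variable $v$ corresponding to edge $e=\{x_1,\ldots,x_n\}$: the partial sum over $v$ of $\prod_{j} I(\cdots)$ equals the product of the marginals iff, for the values of the other variables that matter, the indicator factorizes across $v$ versus the rest — precisely a rectangle condition. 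Iterating this over all variables, and using that the coordinate ranges for a fixed partition move together, should yield that $\mathcal S$ is closed under the "swap coordinates-in-one-range between two points" operation, which is exactly the statement that $\mathcal S$ is a product $\mathcal S_1\times\cdots\times\mathcal S_q$. I expect the bookkeeping of "which other variables matter" and the induction on the order of variable elimination to be the delicate part, since the marginals $I(*,\ldots)$ are no longer $\{0,1\}$-valued and one must track the support carefully; organizing this as: (i) reduce to a single range $q=1$ by handling ranges one at a time, then (ii) prove that in a single range, maximality forces $\mathcal S_1$ to be an arbitrary subset of $\R^{p_1}$ with the pattern count $|\mathcal S_1|^{L/n}$, is how I would structure the write-up.
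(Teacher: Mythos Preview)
Your high-level strategy matches the paper's: compute $\tau^*(H)=L/n$, invoke Theorem~\ref{thm1} for the upper bound, check grids attain it, then analyse when the H\"older steps in the proof of Theorem~\ref{thm1} are tight. You also correctly note the key simplification in the uniform case: with $g\equiv 1/n$, every edge has $\sum_{x\in e}g(x)=1$ exactly, so only the clean H\"older equality condition (proportionality) is in play. One small slip: $f\equiv 1/n$ on edges is not a fractional matching in general; each vertex lies in exactly $p$ edges (one per partition), so the correct constant matching is $f\equiv 1/p$, which has total weight $(pL/n)/p=L/n$.

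The substantive gap is in the hard direction. ``Iterating proportionality over all variables'' is the right slogan, but once the first variable is replaced by $*$ the remaining factors are no longer $\{0,1\}$-valued, and naive iteration loses control of which equalities you actually get. The paper's key observation, which you are missing, is that since the variables may be eliminated in \emph{any} order, equality must hold in the single-variable H\"older step for \emph{every} way of specialising the matrix: some variables set to arbitrary reals, some replaced by $*$, and exactly one left free. This gives a large family of identities (equation~(\ref{e98}) in the paper). The paper then exploits this via a double induction: an outer induction on $s$ proving $I(b_1,\ldots,b_s,*,\ldots,*)=C\,F_1(b_1)\cdots F_s(b_s)$, and an inner induction on the Hamming distance of $(b_1,\ldots,b_{t-1})$ from a fixed base point. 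The inner step uses the hypothesis $\pi_i\ne\pi_t$ (available precisely because one first reduces to $q=p$, not $q=1$ as you suggest) to find two rows in the same block of $\pi_t$ but different blocks of $\pi_i$, and fills in the matrix with four carefully chosen row types to isolate a single ratio identity. Your proposed organisation ``reduce to a single range $q=1$'' goes the wrong way: the paper lumps identical partitions to make all $\pi_m$ distinct ($q=p$), which is exactly what furnishes the $\pi_i\ne\pi_t$ needed in the induction.
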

\begin{proof}
If $\pi_i=\pi_j$, then coordinates $i$ and $j$ of $\R^p$ can be lumped
together and treated as a single coordinate.  Doing this repeatedly,
we reduce to the case where $q=p$ and $p_1=\cdots=p_q=1$.

If we let $g$ have value $1/n$ on each vertex, then $g$ is a fractional
transversal of the hypergraph $H$ of the motif specification.  Also,
if we let $f$ have value $1/p$ on each edge, then $f$ is a fractional
matching of $H$.  However, $f$ and $g$ both have weight $L/n$.  It follows 
that $\tau^*(H)=\nu^*(H)=L/n$.
Now, if we let $\cal S$ contain $r$ points which have all coordinates zero 
except the first, the first coordinate ranging over 1, \dots, $r$, then 
$\cal S$ will have exactly $r^{L/n}$ motifs.  Together with Theorem \ref{thm1},
this proves that the maximum number of motifs in a set of size $r$ is
$r^{L/n}$.  

If $\cal S$ is of the form (\ref{gridform}),  then write down
the $L$ by $p$ matrix of coordinates referred to in Theorem \ref{thm1}.  
We can obtain a motif in $\cal S$
by, for each $i$, substituting an arbitrarily chosen element of ${\cal S}_i$
for each variable in column $i$.  If each set ${\cal S}_i$ has
size $r_i$ ($i=1$, \dots, $p$), it follows that $\cal S$ contains at least
$$
\prod_{1\le i\le p} r_i^{L/n} = (\prod_{1\le i\le p} r_i)^{L/n} = r^{L/n}
$$
motifs.
It remains to prove that any set which attains the maximum number
of motifs for its size is of the form (\ref{gridform}).

Suppose that $\cal S$ is a set which attains the maximum number
of motifs for its size, and let 
$\cal S$ have indicator function $I:\R^p\rightarrow\R$.  We may
assume that $\cal S$ is nonempty, as otherwise the result is trivial.
Now, in
the proof of Theorem \ref{thm1}, we construct a chain of inequalities
starting from ${\cal M}({\cal S})$ and ending at $r^{L/n}$.
To have $r^{L/n}$ motifs in $\cal S$, all these inequalities must be
equalities.  Looking at the matrix of coordinates, we see
that a new inequality appears in the chain whenever we remove a
variable and replace it by *.  Since we can choose to remove the variables
in any order, it follows that we must have
\begin{equation}
\label{e98}
\sum_q I(R_1)^{1/n}\cdots I(R_L)^{1/n}
=
I(R'_1)^{1/n} \cdots I(R'_L)^{1/n},
\end{equation}
whenever $R_1$, \dots, $R_L$ result from taking the rows of the
matrix of coordinates and substituting each variable by either a real number, 
*, or the variable $q$, and the $R'_i$s are derived from the $R_i$s by 
substituting * for $q$.

We prove by induction on $s$ that for all $s=0$, \dots, $p$,
there is a constant $C>0$ (namely $C = I(*,\dots,*)$), and
functions $F_1$, \dots, $F_s:\R\rightarrow \R_{\ge 0}$,
each zero except at finitely many points, such that
\begin{equation}
\label{e99}
I(b_1,\ldots,b_s,*,\ldots,*)
=CF_1(b_1)\cdots F_s(b_s),\qquad \hbox{for all $b_1$, \dots, $b_s\in\R$.}
\end{equation}
If $s=0$, this is clear, with the given value of $C$, since there are
no functions $F_i$.  Otherwise, assume that (\ref{e99}) has been
proved for $s=t-1$.  We wish to prove it for $s=t$.  Fix some
$c_1$, \dots, $c_{t-1}$ such that $I(c_1,\ldots,c_{t-1},*,\ldots,*)>0$,
and set
$$
F_t(q):=\frac{I(c_1,\ldots,c_{t-1},q,*,\ldots,*)}
            {I(c_1,\ldots,c_{t-1},*,*,\ldots,*)},
\qquad \hbox{$q\in\R$.}
$$

We now induce secondarily on the number of coordinates $j$ in 
$\{1,\ldots,t-1\}$ for which $b_j\ne c_j$.
Let this number be $m$.  We have already proved (\ref{e99}) for $s=t$ and 
$m=0$.  Suppose that it has been proved for $s=t$ and 
$m=0$, \dots, $m'-1$.  We wish to prove
it for $s=t$ and
$m=m'$.  Fix some $b_1$, \dots, $b_{t-1}$ which differ from the
$c_i$'s in exactly $m'$ places.  Assume that $I(b_1,\ldots,b_{t-1},*,\ldots,*)>0$; if not, by (\ref{e99}) for $s=t-1$, at least one of $F_1(b_1)$,
\dots, $F_{t-1}(b_{t-1})$ must be zero, so both sides of (\ref{e99}) are
zero and we are done.  Pick some $i\in\{1,\ldots,t-1\}$ for which $b_i\ne c_i$,
and set $b'_j=b_j$ if $j\ne i$ and $j\in\{1,\ldots,t-1\}$, $b'_i=c_i$.
We now fill in the matrix of coordinates with four different
possibilities for a row:
\begin{eqnarray*}
R''_1 &=& (b_1, \ldots, b_{t-1}, q, *, \ldots, *), \\
R''_2 &=& (b'_1, \ldots, b'_{t-1}, q, *, \ldots, *), \\
R''_3 &=& (b_1, \ldots, b_{t-1}, *, *, \ldots, *), \ \ \hbox{and} \\
R''_4 &=& (b'_1, \ldots, b'_{t-1}, *, *, \ldots, *).
\end{eqnarray*}
The rows are determined as follows. By assumption, $\pi_i\ne \pi_t$, so we can find some $u$ and $v$
which are in the same block of $\pi_t$ but different
blocks of $\pi_i$.  We set $R_w:=R''_1$ for each row $w$ such that 
$w$ is in the same block of $\pi_t$ as $u$ and $v$, and $w$ is in the
same block of $\pi_i$ as $u$; $R_w:=R''_2$ for each row $w$ such that
$w$ is in the same block of $\pi_t$ as $u$ and $v$, but $w$ is not in the
same block of $\pi_i$ as $u$;
$R_w:=R''_3$ for each row $w$ such that
$w$ is not in the same block of $\pi_t$ as $u$ and $v$, but $w$ is in the
same block of $\pi_i$ as $u$;
and $R_w:=R''_4$ in rows $w$ such that 
$w$ is not in the same block of $\pi_t$ as $u$ and $v$, and $w$ is not in the
same block of $\pi_i$ as $u$.
We then apply (\ref{e98}).  We observe that by assumption
and by (\ref{e99}) for $s=t-1$,
$F_j(b_j)>0$ and $F_j(c_j)>0$ for $j=1$, \dots, $t-1$, 
so $I(R''_3)>0$ and $I(R''_4)>0$.  We can therefore remove 
factors of $I(R''_3)^{1/n}$ and $I(R''_4)^{1/n}$ whenever they appear on both 
sides of (\ref{e98}), leaving the equality
$$
\sum_q I(R''_1)^{x/n} I(R''_2)^{(n-x)/n}= I(R''_3)^{x/n} I(R''_4)^{(n-x)/n}.
$$
Here, $x$ is the number of indices $w$ such that $u$ and $w$ are in the
same blocks of $\pi_i$ and $\pi_t$; it follows that there are $n-x$ indices
$w$ such that $u$ and $w$ are in the same block of $\pi_t$ but different
blocks of $\pi_i$.  By construction, $0<x<n$.  Then, by Theorem 11 of
\cite{hwp}, 
\begin{equation}
\label{e100}
\frac
{I(b_1,\ldots,b_{t-1},q,*,\ldots,*)}
{I(b_1,\ldots,b_{t-1},*,*,\ldots,*)}
=
\frac
{I(b'_1,\ldots,b'_{t-1},q,*,\ldots,*)}
{I(b'_1,\ldots,b'_{t-1},*,*,\ldots,*)},
\qquad \hbox{for all $q\in\R$.}
\end{equation}
Since (\ref{e99}) has been proved for $s=t$ and $m=m'-1$, the right-hand
side of (\ref{e100}) must equal $F_t(q)$.  Then from (\ref{e100}) and
(\ref{e99}) for $s=t-1$, it follows that (\ref{e99}) holds for $s=t$
with the given $b_1$, \dots, $b_{t-1}$, and $b_t=q$ arbitrary.
Since $b_1$, \dots, $b_{t-1}$ 
were chosen arbitrarily with $m=m'$, this completes the
secondary induction, and hence the overall induction.

We conclude that for some constant $C>0$ and functions
$F_1,\ldots,F_p:\R\rightarrow\R_{\ge 0}$, we have
$$
I(b_1,\ldots,b_p)
=CF_1(b_1)\cdots F_p(b_p),\qquad \hbox{for all $b_1$, \dots, $b_p\in\R$.}
$$
Therefore, $\cal S$ is a grid, as desired.
\end{proof}

\section{Single-starred motifs}

We now discuss another case where we can determine the asymptotic behavior
of the maximum number of motifs in a set of given size.
Call a motif specification on $p,L$ {\em single-starred} if it contains no
constants, and, for each $i=1$, \dots, $L$, there exists a unique
$j$ such that $\pi_j$ contains the block $\{i\}$.  In this case, any fractional transversal 
$g$ of the hypergraph $H$ of the specification must have $g(i)\ge 1$
for all $i=1$, \dots, $L$; since this condition is also sufficient to
make $g$ a fractional transversal, we see that $\tau^*(H)=L$ and that
an optimal fractional transversal must have $g(i)=1$ for all $i$.

For a single-starred specification, write down the matrix of coordinates 
used in Theorems \ref{thm1} and \ref{thm2}, and make a tableau by
replacing each variable 
corresponding to a block of size 1 with *.  By assumption, there will
be exactly one * in each row.  Each assignment $v$ of values to the remaining 
variables then determines $f_1(v)$, \dots, $f_L(v)$
in $(\R\cup\{*\})^p$, given by 
reading off the rows of the tableau. 
Let $k$ be the number of variables remaining.
By choosing an ordering of the variables, we may think of $v$ as being a 
member of $\R^k$.  

Let $M_1(L):=L(L^L+1)$, and 
let $M(L):= (2 M_1(L))^{L-1} L^2 +1$; we have $M(L)\ge M_1(L)$ for all $L$.
We now make the following definitions:
\begin{itemize}
\item A point $v\in\R^k$ is a {\em center} of a set ${\cal S}\subseteq \R^p$
with size $r$ and
indicator function $I:\R^p\rightarrow\{0,1\}$
if at least $L-1$ of $I(f_1(v))$, \dots, $I(f_L(v))$ are at least $r/M(L)$.
\item A point $v\in\R^k$ is a {\em hypercenter} of a set ${\cal S}\subseteq \R^p$
with size $r$ and indicator function $I:\R^p\rightarrow\{0,1\}$
if all of $I(f_1(v))$, \dots, $I(f_L(v))$ are at least $r/M_1(L)$.
\item A point $w\in\R^p$ is {\em in line with} a point $v\in\R^k$
if its coordinate vector is of the form $f_i(v)$, 
for some $i\in\{1,\ldots,L\}$.
\end{itemize}
It follows that every hypercenter of $\cal S$ is also a center of $\cal S$.

For a single-starred motif specification, let
$C_1$, \dots, $C_q$ be the columns of the tableau where a * occurs,
and let $\alpha_1>0$, \dots, $\alpha_q>0$ be the number of *s occurring
in columns $C_1$, \dots, $C_q$, respectively.
Observe that $q\le L$ and that $\alpha_1+\cdots+\alpha_q=L$.
Set $${\cal C}:=\frac{\alpha_1^{\alpha_1}\cdots \alpha_q^{\alpha_q}}
{L^L}.$$
We have ${\cal C}\ge L^{-L}$.

\begin{lemma} 
\label{lemx}
For each single-starred motif specification on $p,L$, there is some function
$\phi$ such that $\phi(r)\rightarrow 0$ as $r\rightarrow\infty$
and such that, for any $r$, there exists a
set ${\cal S}\subseteq \R^p$ of size $r$ with at least ${\cal C} (1 + \phi(r)) 
r^L$ motifs.
\end{lemma}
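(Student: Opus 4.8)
The plan is to construct, for each $r$, an explicit set ${\cal S}\subseteq\R^p$ of size $r$ supported on a union of $q$ coordinate segments through the origin, and to produce many motifs of ${\cal S}$ by assigning one common value to every variable that does not correspond to a singleton block. For a row $i\in\{1,\dots,L\}$ of the tableau let $C_{j(i)}$ be the unique column carrying the $*$ of that row, so that $j(i)\in\{1,\dots,q\}$ and exactly $\alpha_j$ of the indices $i$ have $j(i)=j$. Let $\mathbf e_1,\dots,\mathbf e_p$ denote the standard basis of $\R^p$. Given $r\ge1$, I will choose positive integers $m_1,\dots,m_q$ with $\sum_{j=1}^q(m_j-1)=r-1$, taking each $m_j$ within $1$ of $\alpha_j(r+q-1)/L$ when $r$ is large (for small $r$ any positive $m_j$ with this sum will do), and set
$$
{\cal S}\ =\ \{\mathbf 0\}\ \cup\ \bigcup_{j=1}^{q}\{\,t\,\mathbf e_{C_j}\ :\ t=1,\dots,m_j-1\,\}.
$$
Since distinct $C_j$ are distinct coordinates, these segments meet only at $\mathbf 0$, so $|{\cal S}|=1+\sum_j(m_j-1)=r$ exactly.

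Next I will bound ${\cal M}({\cal S})$ from below. Take the $L\times p$ coordinate matrix from the proofs of Theorems \ref{thm1} and \ref{thm2}; since the specification has no constants every entry of this matrix is a variable, so any assignment of real values to the variables yields an $L$-tuple $(f_1,\dots,f_L)$ that automatically satisfies both defining conditions of a motif, and such a tuple is a motif of ${\cal S}$ exactly when each $f_i\in{\cal S}$. I will assign $0$ to every variable coming from a block of size $\ge2$ and let the variable of the singleton block $\{i\}$ range over $\{0,1,\dots,m_{j(i)}-1\}$; then row $i$ reads off as $t_i\,\mathbf e_{C_{j(i)}}$ with $t_i\in\{0,\dots,m_{j(i)}-1\}$, a point of ${\cal S}$. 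Hence each such assignment gives a motif of ${\cal S}$, distinct assignments give distinct motifs, and therefore
$$
{\cal M}({\cal S})\ \ge\ \prod_{i=1}^{L}m_{j(i)}\ =\ \prod_{j=1}^{q}m_j^{\alpha_j}.
$$
It remains to estimate this product. With $S:=r+q-1$, the function $\prod_j m_j^{\alpha_j}$ restricted to $\{m_j>0:\sum_j m_j=S\}$ attains its maximum, which equals ${\cal C}\,S^L$, at $m_j=\alpha_j S/L$ (weighted AM--GM, using $\prod_j\alpha_j^{\alpha_j}/L^L={\cal C}$). Choosing integers $m_j\ge\alpha_j S/L-1$ with $\sum_j m_j=S$ and all $m_j\ge1$ --- possible once $r$ is large --- and two applications of Bernoulli's inequality then give $\prod_j m_j^{\alpha_j}\ge{\cal C}\,S^L(1-qL/S)$, so ${\cal M}({\cal S})\ge{\cal C}\,r^L(1-qL/(r+q-1))$ because $S\ge r$. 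Taking $\phi(r)=-qL/(r+q-1)$ for large $r$ and $\phi(r)=-1$ for the remaining finitely many $r$ --- where the asserted bound is just ${\cal M}({\cal S})\ge0$, true since $(\mathbf 0,\dots,\mathbf 0)$ is a motif of ${\cal S}$ --- completes the proof, as $\phi(r)\to0$.

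The one substantive step is the construction. Forcing every size-$\ge2$-block variable to take the same value collapses all rows whose $*$ lies in a fixed column $C_j$ onto the single segment $\R\mathbf e_{C_j}$, so that $|{\cal S}|$ grows like $\sum_j m_j$ rather than like $L\cdot\max_j m_j$; this is what lets the proportional allocation $m_j\propto\alpha_j$ contribute the factor $\prod_j\alpha_j^{\alpha_j}$ and thereby improve on the constant $L^{-L}$ implicit in Theorem \ref{thm2}. Everything afterward --- verifying that the collapsed tuples are genuine motifs, computing $|{\cal S}|$ exactly, and the AM--GM-with-rounding estimate --- is routine; the only mild nuisance, the integer rounding of the $m_j$, is handled by the Bernoulli bounds above and by relegating the small-$r$ cases to the trivial bound.
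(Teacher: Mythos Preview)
Your proof is correct and follows essentially the same construction as the paper: place ${\cal S}$ on $q$ coordinate half-lines through the origin, set every non-singleton variable to $0$, and let the singleton variables range along the appropriate segment, yielding $\prod_j m_j^{\alpha_j}$ motifs. The only differences are cosmetic---the paper uses segments of length $\alpha_j N$ not containing the origin and then pads from size $LN=L\lfloor r/L\rfloor$ up to $r$, whereas you share the origin among the segments to hit size $r$ exactly and give an explicit Bernoulli-type error term; both routes give the same asymptotic ${\cal C}\,r^L$.
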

\begin{proof}
For any nonnegative integer $N$, we can construct a set $\cal S$ of
size $LN$ by, for each $i=1$, \dots, $q$, taking $\alpha_i N$ points
which have all coordinates 0 except their $C_i$th, which ranges over
$\{1,\ldots, \alpha_i N\}$.  There are then at least
$$(\alpha_1N)^{\alpha_1} \cdots (\alpha_q N)^{\alpha_q}={\cal C} (LN)^L$$
motifs in $\cal S$.  To construct a set of arbitrary size, $r$ say,
we can set $N:=\floor{r/L}$ and pad out $\cal S$ until we get
a set of size $r$ with at least ${\cal C} (L\floor{r/L})^L$ motifs.
Since $(L\floor{r/L})/r\rightarrow 1$ as $r\rightarrow\infty$,
this completes the proof.
\end{proof}

\begin{lemma} 
\label{lemy}
For each single-starred motif specification on $p,L$, there exists
$r_0$ such that whenever $r\ge r_0$ and
${\cal S} \subseteq \R^p$ of size $r$ has the maximum number of motifs
possible for a set of its size, then $\cal S$ has a hypercenter.
\end{lemma}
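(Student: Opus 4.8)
The plan is to argue by contradiction. Suppose $r$ is large, $\mathcal{S}\subseteq\R^p$ has size $r$ and attains the maximum number of motifs for its size, but $\mathcal{S}$ has no hypercenter; I will show this forces $\mathcal{M}(\mathcal{S})$ to be strictly smaller than the lower bound supplied by Lemma~\ref{lemx}. The starting point is the factorization
$$
\mathcal{M}(\mathcal{S})=\sum_{v\in\R^k} I(f_1(v))\cdots I(f_L(v)),
$$
valid because the specification is single-starred: the tableau has exactly one $*$ per row, the singleton variable filling that cell occurs in no other row, and so the expression~(\ref{eone}) for $\mathcal{M}(\mathcal{S})$, after the non-singleton variables are fixed to $v$, splits into a product of $L$ independent sums over the singleton variables, the $i$-th of which equals $I(f_i(v))$ by the $*$-convention.

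The heart of the argument is the estimate
$$
\sum_{v\in\R^k}\ \prod_{i\ne i_0} I(f_i(v))\ \le\ r^{L-1}\qquad\hbox{for every fixed $i_0\in\{1,\ldots,L\}$.}
$$
To prove this I would expand the left side as the number of pairs $\bigl(v,(z^{(i)})_{i\ne i_0}\bigr)$ in which each $z^{(i)}\in\mathcal{S}$ agrees with the $p$-tuple $f_i(v)$ in every coordinate that is not $*$. Because the specification has no constants, for every $m\ne j_i$ the vertex $i$ lies in a block of $\pi_m$ of size at least $2$; and conversely, every non-singleton block $B$ of every $\pi_m$ contains a vertex $i\ne i_0$, for which necessarily $m\ne j_i$. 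Hence the entries $z^{(i)}_m$ (over $i\ne i_0$ and $m\ne j_i$) already determine the value of $v$ on each of the $k$ non-singleton blocks, so each tuple $(z^{(i)})_{i\ne i_0}$ is compatible with at most one $v$. Since there are only $r^{L-1}$ tuples $(z^{(i)})_{i\ne i_0}$ in $\mathcal{S}$, the estimate follows.

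Granting these two facts, suppose $\mathcal{S}$ has no hypercenter. Then for every $v$ at least one of $I(f_1(v)),\ldots,I(f_L(v))$ is less than $r/M_1(L)$; let $\iota(v)$ be the least such index. Grouping the terms of the factorization of $\mathcal{M}(\mathcal{S})$ by the value of $\iota(v)$, bounding $I(f_{\iota(v)}(v))<r/M_1(L)$ and $\sum_{v:\,\iota(v)=i_0}\prod_{i\ne i_0}I(f_i(v))\le r^{L-1}$ by the estimate above, and summing over the $L$ possible values $i_0$, one gets
$$
\mathcal{M}(\mathcal{S})\ \le\ L\cdot\frac{r}{M_1(L)}\cdot r^{L-1}\ =\ \frac{r^{L}}{L^{L}+1},
$$
since $M_1(L)=L(L^{L}+1)$. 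On the other hand, as $\mathcal{S}$ is extremal, Lemma~\ref{lemx} gives $\mathcal{M}(\mathcal{S})\ge\mathcal{C}\,(1+\phi(r))\,r^{L}$ with $\phi(r)\to0$ and $\mathcal{C}\ge L^{-L}>1/(L^{L}+1)$. Choosing $r_0$ large enough that $\mathcal{C}\,(1+\phi(r))>1/(L^{L}+1)$ whenever $r\ge r_0$ --- which is possible precisely because $\mathcal{C}>1/(L^{L}+1)$ and $\phi(r)\to0$ --- makes the two bounds contradictory, so every extremal $\mathcal{S}$ of size $r\ge r_0$ must have a hypercenter.

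I expect the main obstacle to be the estimate $\sum_v\prod_{i\ne i_0}I(f_i(v))\le r^{L-1}$: one must verify that deleting the row $i_0$ strands no non-singleton variable --- which is exactly where the absence of constants, together with the fact that a block of size at least $2$ survives the removal of a single vertex, enters --- and that the surviving rows reconstruct the deleted variables uniquely. The rest is bookkeeping, and it is precisely the strict inequality $L^{-L}>1/(L^{L}+1)$, i.e.\ the ``$+1$'' built into the definition of $M_1(L)$, that leaves the room needed to close the contradiction.
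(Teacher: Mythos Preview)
Your proposal is correct and follows essentially the same approach as the paper: write ${\cal M}({\cal S})=\sum_v\prod_i I(f_i(v))$, use the absence of a hypercenter to peel off one small factor, bound the remaining sum by $r^{L-1}$, and compare with the lower bound from Lemma~\ref{lemx} using $M_1(L)=L(L^L+1)$. The only stylistic difference is that you justify the key estimate $\sum_v\prod_{i\ne i_0}I(f_i(v))\le r^{L-1}$ by a direct counting argument (the tuple $(z^{(i)})_{i\ne i_0}$ reconstructs $v$), whereas the paper invokes the variable-elimination technique of Theorem~\ref{thm1}; both rest on the same observation that every non-singleton variable survives the deletion of row $i_0$.
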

\begin{proof}
Let $I:\R^p
\rightarrow\{0,1\}$ be the indicator function of $\cal S$.
Then 
\begin{equation}
\label{eee}
{\cal M}({\cal S})=\sum_{v\in\R^k} I(f_1(v))\cdots I(f_L(v)).
\end{equation}
If $\cal S$ has no hypercenter, we know that for each $v$, there
is some $i$ with $I(f_i(v))\le r/M_1(L)$, so (\ref{eee}) can be no more than
$$
\frac{r}{M_1(L)} \sum_{1\le i\le L} \sum_{v\in\R^k} \prod_{j\ne i} I(f_j(v)).
$$
However, each variable must occur at least twice in the tableau, so
for each $i$,
after deleting row $i$, each variable must still occur at least once.
It follows that for each $i$,
\begin{equation}
\label{lemybd}
\sum_{v\in \R^k} \prod_{j\ne i} I(f_j(v))
\le
I(*,\dots,*)^{L-1} = r^{L-1}
\end{equation}
and therefore $\cal S$ has no more than $L r^L/M_1(L)$ motifs.  However,
by Lemma \ref{lemx}, there
exists a set of size $r$ with at least $L^{-L} (1+\phi(r))r^L$ motifs,
and
if we pick $r_0$ large enough, we will have $L^{-L} (1+\phi(r))> (1+L^L)^{-1}$
for all $r\ge r_0$.
This contradicts the assumption that $\cal S$ had the maximum number of motifs.
\end{proof}

\begin{lemma}  
\label{lemz}
For each single-starred motif specification on $p,L$, there exists
$r_0$ such that whenever $r\ge r_0$ and
${\cal S} \subseteq \R^p$ of size $r$ has the maximum number of motifs
possible for a set of its size, then all points in $\cal S$ are in line
with some center.
\end{lemma}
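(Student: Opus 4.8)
The plan is to argue by contradiction. Suppose $r$ is large, $\cal S$ has the maximum number of motifs possible for a set of its size $r$, and some $w\in\cal S$ is in line with no center of $\cal S$; we shall delete $w$ and adjoin a single carefully chosen new point, obtaining a set $\cal S'$ with $|\cal S'|=r$ and ${\cal M}(\cal S')>{\cal M}(\cal S)$, contradicting maximality. (If $L=1$ there is nothing to prove, since then the unique point of $\R^k$ is vacuously a center and every point of $\cal S$ is in line with it, so assume $L\ge2$.) Write $N_w:={\cal M}(\cal S)-{\cal M}(\cal S\setminus\{w\})$, the number of motifs of $\cal S$ using $w$ in at least one row. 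Viewing a motif, as in (\ref{eee}), as a parameter point $v\in\R^k$ together with a choice of a point of $\cal S$ on each line $f_i(v)$, any motif using $w$ has $w$ on some line $f_i(v)$, hence has $w$ in line with $v$; so $v$ is never a center, and every motif counted by $N_w$ is ``non-central''.

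The first step is to bound $N_w$ from above. Fix a row $i$ and count the motifs whose $i$-th point is $w$; this forces the values of all tableau variables occurring in row $i$ and leaves the others free, and, by the previous paragraph, every resulting parameter point $v$ is a non-center, so at least two of $I(f_1(v)),\dots,I(f_L(v))$ are less than $r/M(L)$, and hence at least one small value occurs at some row $j^*\ne i$. Fixing $j^*$, bounding that factor by $r/M(L)$, and then eliminating the free variables one at a time with Lemma \ref{l1} --- each free variable, being absent from row $i$, occurs in at least two rows all different from $i$, hence still occurs in a row outside $\{i,j^*\}$ after row $j^*$ is discarded --- shows that the number of such motifs is at most $(r/M(L))\,r^{L-2}$. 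Summing over the at most $L-1$ candidates for $j^*$ and then over the $L$ rows $i$ gives $N_w\le L(L-1)r^{L-1}/M(L)\le L^2r^{L-1}/M(L)$.

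The second step produces the new point. By Lemma \ref{lemy}, $\cal S$ has a hypercenter $v_0$; fix any row $i_0$, let $z\in\R^p$ be a point on the line $f_{i_0}(v_0)$ whose one free coordinate is chosen so that $z\notin\cal S$ (possible since $\cal S$ is finite), and set $\cal S'=(\cal S\setminus\{w\})\cup\{z\}$, so that $|\cal S'|=r$. Among the motifs of $\cal S'$ whose $i_0$-th point is $z$, just the single parameter value $v=v_0$ already accounts for $\prod_{j\ne i_0}I'(f_j(v_0))$ of them, where $I'$ is the indicator of $\cal S'$; since $\cal S'$ differs from $\cal S$ on any line by at most one point and $v_0$ is a hypercenter, each factor is at least $r/M_1(L)-1$, so $\cal S'$ has at least $(r/M_1(L)-1)^{L-1}$ motifs using $z$.

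Finally, ${\cal M}(\cal S')$ is ${\cal M}(\cal S\setminus\{w\})$ plus the number of motifs of $\cal S'$ using $z$, so the two steps give ${\cal M}(\cal S')\ge{\cal M}(\cal S)-N_w+(r/M_1(L)-1)^{L-1}$, and it remains to check that $(r/M_1(L)-1)^{L-1}>L^2r^{L-1}/M(L)$ once $r$ is large. When $r\ge2M_1(L)$ we have $r/M_1(L)-1\ge r/(2M_1(L))$, so the left side is at least $r^{L-1}/(2M_1(L))^{L-1}$, and the required inequality reduces to $M(L)>L^2(2M_1(L))^{L-1}$, which is immediate from the definition $M(L)=(2M_1(L))^{L-1}L^2+1$; so taking $r_0$ to be the larger of $2M_1(L)$ and the threshold furnished by Lemma \ref{lemy} completes the proof. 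I expect the delicate point to be the bound on $N_w$: one must check that after fixing a row and deleting the row of a small line every remaining variable is still covered by the surviving rows, so that Lemma \ref{l1} applies, and that a small line can always be found off the fixed row --- both of which use that a single-starred, constant-free specification has every variable appearing in at least two rows.
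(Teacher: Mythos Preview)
Your proof is correct and follows essentially the same approach as the paper: bound the loss from deleting $w$ by $L^2 r^{L-1}/M(L)$ via the non-center property and Lemma~\ref{l1}, then use the hypercenter from Lemma~\ref{lemy} to gain at least $(r/M_1(L)-1)^{L-1}$ motifs by reinserting a point on one of its lines, and compare using the definition of $M(L)$. You are in fact slightly more careful than the paper in one place---you explicitly choose the reinserted point $z$ to lie outside $\cal S$, which the paper leaves implicit.
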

\begin{proof}
If $L=1$, all points in $\R^k$ are centers, so the result is trivial.
We may therefore assume that $L\ge 2$.
Let $I:\R^p \rightarrow\{0,1\}$ be the indicator function of $\cal S$.
If we remove one point, $w$, from ${\cal S}$, then $I(f_i(v))$ does not
decrease if the coordinate vector of $w$ is not of the form $f_i(v)$, 
and decreases by 1 if the coordinate vector is of the form $f_i(v)$.
Therefore, after removing $w$, the number of motifs in $\cal S$ can
decrease by at most
$$
Y:=\sum_{1\le i\le L} \sum_{v\in {\cal T}_i} \prod_{j\ne i} I(f_j(v)),
$$
where ${\cal T}_i$ is the set of points $v\in\R^k$ for which $w$
is of the form $f_i(v)$.
If $w$ is not in line with any center, there are no centers in ${\cal T}_1
\cup\dots\cup{\cal T}_L$,
so for any $i$ and $v\in{\cal T}_i$, 
there must be some $i'\ne i$ with $I(f_{i'}(v))<r/M(L)$.
Therefore, 
$$
Y\le \frac{r}{M(L)} \sum_{1\le i\ne i'\le L} \sum_{v\in{\cal T}_i} \prod_{j\notin\{i,i'\}}
I(f_j(v)).
$$
However, when we sum over $v\in{\cal T}_i$, we are only summing over variables
which do not occur in the $i$th row of the tableau, so after deleting 
the $i$th and $i'$th rows, each variable summed over must still occur at least
once.  Therefore,
$$
\sum_{v\in{\cal T}_i} \prod_{j\notin\{i,i'\}} I(f_j(v))
\le I(*,\dots,*)^{L-2} = r^{L-2}
$$
and so we have decreased the number of motifs in $\cal S$ by at most
$(L^2/M(L)) r^{L-1}$.  Now, if we take $r_0$ 
to be at least as large as in Lemma \ref{lemy}, then $\cal S$ must have
a hypercenter, $v_0$, say, and after deleting $w$, each of 
$I(f_1(v_0))$, \dots, $I(f_L(v_0))$ must be at least $(r/M_1(L))-1$.
If we take $r_0$ to be at least as large as $2 M_1(L)$, 
adding $w$ back to be in line with $v_0$ will then increase the 
number of motifs in $\cal S$ by at least $((r/M_1(L))-1)^{L-1}
\ge (r/(2 M_1(L)))^{L-1}$.
Since $M(L)>(2 M_1(L))^{L-1} L^2$,
this means that moving $w$ to be in 
line with a hypercenter increases the number of motifs in $\cal S$.
This contradicts the assumption that $\cal S$ had the maximum number of motifs.
\end{proof}

\begin{lemma} 
\label{lemctr}
For each single-starred motif specification on $p,L$,
a set $\cal S$ can have at most $L M(L)^{L-1}$ centers.
\end{lemma}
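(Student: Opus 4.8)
The plan is to show that every center of $\cal S$ is uniquely determined by a small amount of discrete data, and then bound the number of possibilities for that data. Throughout we may assume $\cal S\ne\emptyset$, so $r\ge 1$. Recall that, by the single-starred hypothesis and the absence of constants, every row of the tableau has exactly one $*$, the remaining $p-1$ entries of each row being variables; and each variable corresponds to a block of size at least $2$ of some partition, so (as already noted in the proof of Lemma~\ref{lemy}) it occurs in at least two rows of the tableau. Given a center $v$, the definition supplies an index $i(v)\in\{1,\ldots,L\}$ with $I(f_j(v))\ge r/M(L)$ for every $j\ne i(v)$, since at most one of the $L$ quantities $I(f_j(v))$ can fail that inequality; fix such an $i(v)$.

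The first key step is that $v$ is recovered from the pair $\bigl(i(v),\ (f_j(v))_{j\ne i(v)}\bigr)$. Indeed, $v$ is nothing but the list of values of the $k$ variables of the tableau, and each such variable, occurring in at least two rows, occurs in some row $j\ne i(v)$; in that row its value appears as a non-$*$ coordinate of $f_j(v)$, so it can be read off. Hence the assignment $v\mapsto\bigl(i(v),(f_j(v))_{j\ne i(v)}\bigr)$ is injective on the set of centers of $\cal S$.

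It remains to count the possible pairs. There are at most $L$ choices for $i(v)$. For a fixed row index $j$, the unique $*$ of row $j$ sits in a fixed column, so as $v$ ranges over centers the point $f_j(v)$ determines a line of $\R^p$ parallel to one coordinate axis, namely the set of $w\in\R^p$ agreeing with $f_j(v)$ away from that $*$-coordinate; distinct points $f_j(v)$ (for this fixed $j$) give disjoint lines, since they must disagree in a coordinate that is constant along both. Whenever $j\ne i(v)$ this line contains $I(f_j(v))\ge r/M(L)$ points of $\cal S$. As $\cal S$ has only $r$ points and the lines are disjoint, at most $M(L)$ of them can be this heavy, so $f_j(v)$ has at most $M(L)$ possible values. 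Multiplying over the $L-1$ indices $j\ne i(v)$ and summing over the choice of $i(v)$ gives at most $\sum_{i=1}^{L}M(L)^{L-1}=L\,M(L)^{L-1}$ centers.

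The one step I would state with care is the recovery of $v$ from the lines $f_j(v)$, $j\ne i(v)$: it is exactly what forces the exponent $L-1$ rather than $L$, and it rests entirely on each variable of the tableau occurring in at least two rows and therefore surviving the deletion of the single row $i(v)$. The remaining ingredients — that distinct parallel coordinate lines are disjoint, and that a set of $r$ points meets at most $M(L)$ disjoint lines each in at least $r/M(L)$ points — are routine, and no property of $M(L)$ beyond $M(L)>0$ is needed.
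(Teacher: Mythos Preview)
Your proof is correct and follows essentially the same approach as the paper: fix the exceptional index $i$, observe that each remaining row $j$ admits at most $M(L)$ values of $f_j(v)$ with $I(f_j(v))\ge r/M(L)$ (you phrase this geometrically via disjoint heavy lines, the paper via the identity $\sum I(f_j(v))=r$), and recover $v$ from these rows because every variable survives the deletion of row $i$. The two arguments differ only in presentation.
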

\begin{proof}
For each center $v$, there is some $i$ such that $I(f_j(v))\ge r/M(L)$
for all $j\ne i$.  Fix $i$.  Then, for each $j\ne i$, if we sum $I(f_j(v))$
over the variables occurring in the $j$th row of the tableau,
we will get $r$.  Therefore, there can be at most $M(L)$ assignments of values
to the variables in the $j$th row of the tableau for which $I(f_j(v))\ge r/M(L)$.
Recalling that, after deleting row $i$, each variable must still occur
at least once, it follows that after fixing $i$,
there are at most $M(L)^{L-1}$ possibilities for $v$.
$\cal S$ therefore has at most $L M(L)^{L-1}$ possible centers.
\end{proof}

\begin{theorem}
For each single-starred motif specification on $p,L$,
as $r$ becomes large,
the maximum number of motifs in a set ${\cal S}\subseteq \R^p$
of size $r$ is asymptotic to ${\cal C} r^L$.
Also, there is some $r_0$ such that if $r\ge r_0$, then a set $\cal S$
of size $r$ with a maximum number of motifs for its size
must have exactly one center, which must also be a
hypercenter, and all points in $\cal S$ must be in line with the center.
In addition, the points in $\cal S$ must fall into $q$ lines, $\ell_1$,
\dots, $\ell_q$, say, where each $\ell_i$ is given by fixing
all coordinates except the $C_i$th.
\end{theorem}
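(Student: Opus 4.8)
The plan is to pair the lower bound of Lemma~\ref{lemx} with a matching upper bound for an extremal set, and then extract the structural conclusions from the resulting near-equality. So fix a large $r$ and a set ${\cal S}$ of size $r$ with the maximum possible number of motifs; let $I$ be its indicator function. For $v\in\R^k$ and $i\in\{1,\ldots,L\}$ let $\lambda_i(v)$ be the line consisting of all points of $\R^p$ in line with $v$ via row $i$ (those of the form $f_i(v)$), so that $I(f_i(v))=|{\cal S}\cap\lambda_i(v)|$; write $C_{j(i)}$ for the column in which row $i$ of the tableau carries its unique $*$, so that $\lambda_i(v)$ varies coordinate $C_{j(i)}$. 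By Lemma~\ref{lemz} and Lemma~\ref{lemctr}, every point of ${\cal S}$ lies on one of the lines $\lambda_i(v)$ with $v$ a center; let $\Lambda$ be this set of lines, so $|\Lambda|\le B:=L^2M(L)^{L-1}$, and partition $\Lambda=\Lambda_1\sqcup\cdots\sqcup\Lambda_q$ according to which coordinate $C_j$ a line varies. Since two distinct lines meet in at most one point, all but $O(1)$ of the points of ${\cal S}$ lie on exactly one line of $\Lambda$; hence, putting $s_j:=\sum_{\lambda\in\Lambda_j}|{\cal S}\cap\lambda|$, we have $\sum_j s_j\le r+O(1)$.

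Now estimate ${\cal M}({\cal S})=\sum_v I(f_1(v))\cdots I(f_L(v))$. Call $v$ \emph{good} if every $\lambda_i(v)$ lies in $\Lambda$. If $v$ is not good then $I(f_i(v))\le B$ for some $i$ --- otherwise $\lambda_i(v)$ carries more than $B$ points of ${\cal S}$, two of which lie on a common line of $\Lambda$, and a line meeting $\lambda_i(v)$ in two points must equal it --- so, using~(\ref{lemybd}), the non-good $v$ contribute at most $BL\,r^{L-1}$. For good $v$ the assignment $v\mapsto(\lambda_1(v),\ldots,\lambda_L(v))$ is injective (every variable occurs in some row, and the line through that row fixes its value) and $I(f_i(v))=|{\cal S}\cap\lambda_i(v)|$, so the good $v$ contribute at most
$$
\sum_{(\lambda_1,\ldots,\lambda_L)}|{\cal S}\cap\lambda_1|\cdots|{\cal S}\cap\lambda_L|\;=\;\prod_{j=1}^q s_j^{\,\alpha_j},
$$
the sum running over all tuples with $\lambda_i$ varying coordinate $C_{j(i)}$. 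By the weighted arithmetic--geometric mean inequality $\prod_j s_j^{\alpha_j}\le{\cal C}\left(\sum_j s_j\right)^L\le{\cal C}(r+O(1))^L$, so ${\cal M}({\cal S})\le{\cal C}r^L+O(r^{L-1})$. Together with Lemma~\ref{lemx} this gives ${\cal M}({\cal S})={\cal C}r^L(1+o(1))$, i.e.\ the maximum is asymptotic to ${\cal C}r^L$.

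For the remaining assertions, note that for the extremal ${\cal S}$ each inequality above is an equality up to $o(r^L)$. Near-equality in the weighted AM--GM step forces $s_j=(\alpha_j/L+o(1))r$ for every $j$; near-equality in the passage from the good-$v$ sum to $\prod_j s_j^{\alpha_j}$ forces both that inside each $\Lambda_j$ a single line $\ell_j$ carries all but $o(r)$ of $s_j$ and that the tuples violating the equality constraints imposed by rows sharing a variable carry only $o(r^L)$ of the weight. Now bring in a hypercenter $v_0$ from Lemma~\ref{lemy}: each $\lambda_i(v_0)$ carries at least $r/M_1(L)$ points, so by the argument above $\lambda_i(v_0)\in\Lambda_{j(i)}$, and since the lines of $\Lambda_{j(i)}$ other than $\ell_{j(i)}$ are eventually too small we get $\lambda_i(v_0)=\ell_{j(i)}$ for $r$ large; hence $v_0$ realizes the tuple $(\ell_{j(1)},\ldots,\ell_{j(L)})$, the distinct lines among the $\lambda_i(v_0)$ are exactly $\ell_1,\ldots,\ell_q$, and $|{\cal S}\cap\ell_j|=(\alpha_j/L+o(1))r>r/M_1(L)$. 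Running the same argument on an arbitrary center $v$ shows that each of its (at least $L-1$) rows with $I(f_i(v))\ge r/M(L)$ has $\lambda_i(v)=\ell_{j(i)}$; these $L-1$ lines already fix every variable (each variable lies in at least two rows, at most one of which can be the exceptional one), so $v=v_0$, and the center is unique. Finally Lemma~\ref{lemz} says every point of ${\cal S}$ is in line with a center, hence with $v_0$, hence lies on $\ell_1\cup\cdots\cup\ell_q$, where each $\ell_j$ is obtained by fixing all coordinates except the $C_j$th.

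I expect the genuinely delicate point to be the collapse asserted at the start of the last paragraph: inferring, from the ``bad'' tuples having weight $o(r^L)$, that within each $\Lambda_j$ one line absorbs all but $o(r)$ of $s_j$. This requires a careful look at exactly which tuples in $\prod_j\Lambda_j^{\alpha_j}$ violate the equality constraints coming from rows that share a variable, and at why a genuine split of $s_j$ between two lines of $\Lambda_j$ would be forced to destroy $\Theta(r^L)$ of the weight; the rest is routine bookkeeping on top of Lemmas~\ref{lemx}--\ref{lemctr}.
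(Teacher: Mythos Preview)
Your asymptotic argument is correct and close to the paper's: both decompose the motif sum, bound the ``off-center'' contribution by $O(r^{L-1})$, bound the main term by $\prod_j s_j^{\alpha_j}$ (the paper via its inequality~(\ref{e102b}), you via the injection of good $v$'s into tuples of lines), and finish with weighted AM--GM. Your good/not-good split is a minor repackaging of the paper's $Z_1,\dots,Z_{L+1}$ decomposition.

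The gap is exactly where you flag it: the ``collapse'' step asserting that near-equality in $\sum_{\text{good }v}\prod_i |{\cal S}\cap\lambda_i(v)|\le\prod_j s_j^{\alpha_j}$ forces one dominant line in each $\Lambda_j$. You need this to identify the hypercenter's lines with the $\ell_j$'s, but you prove it only by gesture. The issue is that an inconsistent tuple has large weight only if \emph{all} its entries are large lines, and you have not yet established that any consistent tuple has all entries large; so a split of $s_j$ between two lines does not obviously create $\Theta(r^L)$ of inconsistent weight.

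The paper sidesteps this by reversing the order: it brings in the hypercenter $v_0$ \emph{first}, and uses it to manufacture concrete inconsistent cross-terms. If $v'\ne v_0$ is another center, some variable differs, hence some row $i$ has $f_i(v')\ne f_i(v_0)$ with $I(f_i(v'))\ge r/M(L)$; then the tuple obtained from $(f_1(v_0),\dots,f_L(v_0))$ by replacing the $i$th entry with $f_i(v')$ appears on the right of the product inequality but cannot come from any $v$ (each variable sits in at least two rows, so the $L-1$ unchanged rows already pin $v$ to $v_0$), and it has weight $\ge (r/M(L))(r/M_1(L))^{L-1}=\Theta(r^L)$. This single cross-term already contradicts near-equality, giving uniqueness of the center directly; the same trick with $f_j(v_0)$ in place of $f_i(v')$ handles the $q$-line conclusion. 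In short, the hypercenter is what supplies the large consistent tuple you need, and once you use it this way your delicate collapse becomes a one-line consequence rather than a prerequisite.
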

\begin{proof}
We have already proved that ${\cal C} r^L$ is an asymptotic lower bound
in Lemma \ref{lemx}.  We now need to prove that it is also an asymptotic
upper bound, and that optimal sets $\cal S$ have the desired form.

Let $r_0$ be at least as large as required in Lemmas \ref{lemy}
and \ref{lemz}, and
let $\cal S$ be a set of size $r\ge r_0$ with a maximum number of motifs
for its size.  
As usual, the number of motifs in $\cal S$ is
$$
\sum_{v\in\R^k} I(f_1(v))\cdots I(f_L(v)).
$$
We may bound this sum above by the sum of $L+1$ pieces:
the first, $Z_1$, will be given by summing over all $v$ where
$f_1(v)\ne f_i(v')$ for all centers $v'$ and $i=1$, \dots, $L$,
\dots, the $L$th, $Z_L$, by summing over all $v$
where $f_L(v)\ne f_i(v')$ for all centers $v'$ and $i=1$, \dots, $L$,
and the last, $Z_{L+1}$, is given by summing over all $v$ where, 
for each $j=1$, \dots, $L$, there is some center $v'$
and $i\in\{1,\ldots,L\}$ for which $f_j(v)=f_i(v')$.  To bound the first
piece, observe that if $f_1(v)\ne f_i(v')$, then there are either no
points whose coordinate vectors are both of the form $f_1(v)$ and $f_i(v')$
(in the case where the * is in the same position in $f_1(v)$ and $f_i(v')$)
or at most one point (in the case where the * is in different positions
in $f_1(v)$ and in $f_i(v')$.)  Also, by Lemma \ref{lemz}, every point in 
$\cal S$ is of the form $f_i(v')$, for some $i$ and some center $v'$.
Then, summing over $i$ and $v'$ and using Lemma \ref{lemctr}, we see that $I(f_1(v))\le L^2 M(L)^{L-1}$,
so by (\ref{lemybd}), the first piece of the sum is no more than
$L^2 M(L)^{L-1} r^{L-1}$.  We can bound the 2nd through $L$th pieces in the
same way,
so 
\begin{equation}
\label{e101a}
{\cal M}({\cal S})\le Z_1+\cdots+Z_{L+1}\le Z_{L+1}+L^3 M(L)^{L-1} r^{L-1}.
\end{equation}

For $j=1$, \dots, $L$, let $\sigma_j\in\{1,\ldots,q\}$ 
be such that there is a * in row $j$ and column $C_{\sigma_j}$ of the 
tableau, and for 
each $i=1$, \dots, $q$, let ${\cal S}_i$ be the set of points of
$\cal S$ whose coordinate vectors are of the form $f_j(v)$, for some
center $v$ of $\cal S$, and some $j$ with $\sigma_j=i$.
For $i=1$, \dots, $q$,
we let ${\cal S}_i$ have indicator function $I_i$ and size $r_i$.
By Lemma \ref{lemz}, all points in $\cal S$
must be in line with some center, so ${\cal S}=\cup_{1\le i\le q} {\cal S}_i$.
Now, if there is a point $w$ which is in ${\cal S}_i \cap
{\cal S}_{i'}$, for some $i\ne i'$, then $w$ is of the form $f_j(v)$ and also of the form
$f_{j'}(v')$, for some centers $v$ and $v'$ of $\cal S$.  Every 
coordinate of $w$ except the $C_i$th is determined by $v$ and $j$, and
every coordinate of $w$ except the $C_{i'}$th is determined by $v'$ and $j'$.
Then, since $i\ne i'$, $w$ is determined by $v$, $v'$, $j$, and $j'$.  
Since $\cal S$ can have at most $L M(L)^{L-1}$ centers, there are at 
most $L^4 M(L)^{2L-2}$ possibilities for $w$.  Therefore, 
\begin{equation}
\label{e101b}
r_1+\cdots+r_q\le r + L^5 M(L)^{2L-2}.
\end{equation}

However, by the definition of $Z_{L+1}$,
\begin{equation}
\label{e102a}
Z_{L+1}\le 
\sum_{v \in \R^k} I_{\sigma_1}(f_1(v)) \cdots
I_{\sigma_L}(f_L(v)),
\end{equation}
and we have the obvious inequality
\begin{equation}
\label{e102b}
\sum_{v \in \R^k} I_{\sigma_1}(f_1(v)) \cdots
I_{\sigma_L}(f_L(v))
\le 
\left(\sum_{v_1} I_{\sigma_1}(v_1)\right)
\cdots
\left(\sum_{v_L} I_{\sigma_L}(v_L)\right),
\end{equation}
where, for each $i$, $v_i$ is summed over all elements of $(\R\cup\{*\})^p$
which contain exactly one *, in coordinate $C_{\sigma_i}$.
However, the right-hand side of this inequality is just
$$
\prod_{1\le i\le L} r_{\sigma_i} = \prod_{1\le j\le q} r_j^{\alpha_j}
$$
and therefore from (\ref{e102a}) and (\ref{e102b}) we get
\begin{equation}
\label{e101c}
Z_{L+1} \le \prod_{1\le j\le q} \alpha_j^{\alpha_j}
\prod_{1\le j\le q} (r_j/\alpha_j)^{\alpha_j}.
\end{equation}
The asymptotic upper bound ${\cal C} r^L$ now follows immediately from
(\ref{e101a}), (\ref{e101b}),  (\ref{e101c}),
and Theorem 9 of \cite{hwp}.

By Lemma \ref{lemy}, we already know that $\cal S$ has a hypercenter,
$v_0$, say, and by Lemma \ref{lemz}, we know that all its points are in line 
with some center.
It remains to show that $\cal S$ has only one center, and that its 
points fall into $q$ lines as claimed.  However, if $\cal S$ had a center,
$v'$, say, not equal to $v_0$, then $v'$ and $v_0$ would have to
assign some variable
a different value.  Since each variable occurs in at least two rows of the
matrix of coordinates, there must therefore be some $i$ for which 
$I(f_i(v'))\ge r/M(L)$ and 
$f_i(v')\ne f_i(v_0)$.  Now, the term
$$
I_{\sigma_1}(f_1(v_0)) \cdots I_{\sigma_{i-1}}(f_{i-1}(v_0)) I_{\sigma_i}(f_i(v')) I_{\sigma_{i+1}}(f_{i+1}(v_0)) \cdots I_{\sigma_L}(f_L(v_0))
$$
will occur on the right-hand side but not the left-hand side of (\ref{e102b}),
and it will have 
value at least $(r/M(L))(r/M_1(L))^{L-1}$; this means that the left- and right-hand
sides of (\ref{e101c}) must differ by at least $r^L/(M(L)M_1(L)^{L-1})$.  By 
our asymptotic estimate, this will prevent $\cal S$ from containing the 
maximum possible number of motifs, if we take $r_0$ sufficiently large.  We 
conclude that $\cal S$ must have only one center, $v_0$.  Finally, if there 
are $i$ and $j$ for which $\sigma_i=\sigma_j$ but $f_i(v_0)\ne f_j(v_0)$, the 
term
$$
I_{\sigma_1}(f_1(v_0)) \cdots I_{\sigma_{i-1}}(f_{i-1}(v_0)) I_{\sigma_i}(f_j(v_0)) I_{\sigma_{i+1}}(f_{i+1}(v_0)) \cdots I_{\sigma_L}(f_L(v_0))
$$
will occur on the right-hand side but  not the left-hand side of (\ref{e102b}),
and it will have value at least $(r/M_1(L))^L$.  As before, this 
will prevent $\cal S$
from containing the maximum possible number of motifs.  Therefore, we conclude
that the set $\{f_1(v_0), \ldots, f_L(v_0)\}$ has only $q$ distinct elements,
one with a * in position $C_i$ for $i=1$, \dots, $q$.  Then, 
since every point in $\cal S$ must be in line with the unique center,
$v_0$, every point in $\cal S$ must have coordinate vector matching
one of these $q$ distinct elements.  This establishes the only remaining
claim.
\end{proof}

\section{Conclusion}

To finish, we ask two questions:
\begin{enumerate}
\item For a general motif specification, what is the asymptotic behavior of the maximum number of motifs in a set of given size?
\item For motif specifications with no constants and uniform hypergraph, sets with the maximum number of motifs for their size are always grids.  Also, for single-starred motif specifications, large enough sets with the maximum possible number of motifs for their size are always the union of $q$ lines.  Are large sets with the maximum possible number of motifs for their size always unions of a small number of grids?
\end{enumerate}

\vspace{.75in}

\end{document}